\newtheorem{theorem}{Theorem}
\newtheorem{thm}[theorem]{Theorem}
\newcommand\FP{\Phi} % notation for the 2-variable polynomial
\title{Median eigenvalues of bipartite graphs}
\author{%
  Bojan Mohar\thanks{Supported in part by an NSERC Discovery Grant (Canada),
    by the Canada Research Chair program, and by the
    Research Grant P1--0297 of ARRS (Slovenia).}~\thanks{On leave from:
    IMFM \& FMF, Department of Mathematics, University of Ljubljana, Ljubljana,
    Slovenia.}\\[2mm]
  Department of Mathematics\\
  Simon Fraser University\\
  Burnaby, BC, Canada\\
  {\tt mohar@sfu.ca}
\and
  Behruz Tayfeh-Rezaie\\[2mm]
  School of Mathematics\\
  Institute for Research in Fundamental Sciences (IPM)\\
  P.O. Box 19395-5746, Tehran, Iran\\
  {\tt tayfeh-r@ipm.ir}
}
\date{}
\begin{document}

\maketitle

\begin{abstract}
For a graph $G$ of order $n$ and with eigenvalues $\lambda_1\geqslant\cdots\geqslant\lambda_n$,
the HL-index $R(G)$ is defined as
$R(G) ={\max}\left\{|\lambda_{\lfloor(n+1)/2\rfloor}|, |\lambda_{\lceil(n+1)/2\rceil}|\right\}.$
We show that for every connected bipartite graph $G$ with maximum degree $\Delta\geqslant3$, $R(G)\leqslant\sqrt{\Delta-2}$ unless $G$ is the the incidence graph of a projective plane of order $\Delta-1$. We also present an approach through graph covering to construct infinite families of bipartite graphs with large HL-index.

\vspace{5mm}
\noindent {\bf Keywords:} adjacency matrix, graph eigenvalues, median eigenvalues, covers. \\[.1cm]
\noindent {\bf AMS Mathematics Subject Classification\,(2010):} 05C50.
\end{abstract}

\section{Introduction}

Unless explicitly stated, we assume that all graphs in this paper are simple, i.e. multiple edges and loops
are not allowed.
The \emph{adjacency matrix} of $G$, denoted by $A(G)=(a_{uv})_{u,v\in V(G)}$, is a $(0,1)$-matrix whose
rows and columns are indexed by the vertices of $G$ such that $a_{uv}=1$ if and only if $u$ is adjacent to $v$.
We use $\deg_G(v)$ to denote the degree of vertex $v$ in $G$. The set of all neighbours of $v$ is denoted by $N_G(v)$ and we write $N_G[v]=N_G(v)\cup \{v\}$.
The smallest and largest degrees of $G$ are denoted by $\delta(G)$ and $\Delta(G)$, respectively.

Let $\lambda_1\geqslant\cdots\geqslant\lambda_n$ be the eigenvalues of $G$. Then $\lambda_{\lfloor(n+1)/2\rfloor}$ and $\lambda_{\lceil(n+1)/2\rceil}$ are
called the \emph{ median } eigenvalue(s) of $G$. These eigenvalues play an important role in mathematical chemistry since they are related to the HOMO-LUMO separation, see, e.g. \cite{GuPol} and \cite{FP1,FP2}.
Following \cite{FJP}, we define the \emph{HL-index} $R(G)$ of the graph $G$ as
$$R(G) = \max\left\{|\lambda_{\lfloor(n+1)/2\rfloor}|, |\lambda_{\lceil(n+1)/2\rceil}|\right\}.$$

If $G$ is a bipartite graph, then $R(G)$ is equal to $\lambda_{n/2}$ if $n$ is even and $0$, otherwise.
In this paper, we show that for every connected bipartite graph $G$ with maximum degree $\Delta$, $R(G)\leqslant\sqrt{\Delta-2}$ unless $G$ is the the incidence graph of a projective plane of order $\Delta-1$, in which case it is equal to $\sqrt{\Delta-1}$.
This extends the result of one of the authors \cite{Mo2} who proved the same for subcubic graphs.

On the other hand, we present an approach through graph covering to construct infinite families of connected graphs with large HL-index. Graph coverings and analysis of their eigenvalues were instrumental in a recent breakthrough in spectral graph theory by Marcus, Spielman, and Srivastava who used graph coverings to construct infinite families of Ramanujan graphs of arbitrary degrees \cite{MSS1} (and for solving the Kadison-Singer Conjecture \cite{MSS2}). In our paper, we find another application of a different character. As opposed to double covers used in \cite{MSS1}, we use $k$-fold covering graphs with cyclic permutation representation and show that the behavior of median eigenvalues can be controlled in certain instances. The main ingredient is a generalization of a result of Bilu and Linial \cite{BL} that eigenvalues of double covers over a graph $G$ are the union of the eigenvalues of $G$ and the eigenvalues of certain cover matrix $A^-$ that is obtained from the adjacency matrix by replacing some of its entries by $-1$. In our case, we use a family $A^\lambda$ of such matrices, where instead of $-1$ we use certain powers of a parameter $\lambda\in [-1,1]$. This result seems to be of independent interest.

\section{Bounds for bipartite graphs}

In this section we obtain upper bounds on the HL-index of bipartite graphs in terms of maximum and minimum degrees of graphs.
We consider regular graphs first.

\begin{thm}\label{regmed}
Let $G$ be a connected bipartite $k$-regular graph, where $k\ge3$. If\/ $R(G)>\sqrt{k-2}$, then
$R(G)=\sqrt{k-1}$ and $G$ is the incidence graph of a projective plane of order $k-1$.
\end{thm}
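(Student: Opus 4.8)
The plan is to convert the statement about the median eigenvalue of $G$ into a statement about the least eigenvalue of a combinatorially defined graph, and then to apply the classification of graphs whose least eigenvalue exceeds $-2$. Fix a bipartition $(X,Y)$ and let $B$ be the biadjacency matrix, so that $A(G)$ has the usual $2\times 2$ block form and the nonzero eigenvalues of $A(G)$ are $\pm\sigma$, with $\sigma$ ranging over the singular values of $B$. Since $G$ is bipartite its spectrum is symmetric about $0$, and because $|X|=|Y|=n/2$ (by $k$-regularity) the quantity $R(G)^2$ equals the smallest eigenvalue $\theta_{\min}$ of $M:=BB^{\top}$. Writing $M=kI+N$, the matrix $N$ has zero diagonal and records in entry $(u,v)$ the number of common neighbours of $u,v\in X$; thus the hypothesis $R(G)>\sqrt{k-2}$ is exactly the inequality $\lambda_{\min}(N)>-2$.

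First I would show that $N$ is a $(0,1)$-matrix. If some pair $u,v\in X$ had $c\ge 2$ common neighbours, then the principal $2\times 2$ submatrix of $N$ on $\{u,v\}$ would have eigenvalue $-c\le -2$, and Cauchy interlacing would force $\lambda_{\min}(N)\le -2$, contradicting the hypothesis. Hence any two vertices of $X$ share at most one neighbour (equivalently, $G$ has no $4$-cycle), and $N=A(H)$ is the adjacency matrix of a simple graph $H$ on $X$. A double count over the $2$-walks from a fixed vertex shows that $H$ is $d$-regular with $d=k(k-1)$, and connectivity of $G$ makes $H$ connected. Thus $H$ is a connected $d$-regular graph with $\lambda_{\min}(H)>-2$ and $d=k(k-1)\ge 6$.

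The heart of the argument, and the step I expect to be the main obstacle, is to prove that a connected regular graph with least eigenvalue greater than $-2$ must be either a complete graph or an odd cycle. I would derive this from the root-system description of graphs with least eigenvalue at least $-2$ (Cameron--Goethals--Seidel--Shult): such a graph is a generalized line graph or one of finitely many exceptional graphs on at most $36$ vertices. The strict inequality $\lambda_{\min}>-2$ excludes generalized line graphs that are not ordinary line graphs, and for an ordinary line graph $L(\Gamma)$ it forces $\Gamma$ to be a tree or an odd-unicyclic graph. Imposing regularity of degree at least $3$ then leaves only complete graphs (regular line graphs of trees come from stars, and the odd-unicyclic case collapses to degree $2$), while the exceptional graphs are ruled out once $d=k(k-1)$ is large and are checked by hand in the finitely many small cases. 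Applying this to $H$ and using $d\ge 6>2$ yields $H=K_{d+1}$.

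It remains to translate $H=K_{d+1}$ back into geometry. Completeness of $H$ means every two vertices of $X$ have exactly one common neighbour, and $|X|=d+1=k^2-k+1$. Running the identical argument on $B^{\top}B$ (legitimate since $BB^{\top}$ and $B^{\top}B$ have the same spectrum here) shows that every two vertices of $Y$ also have exactly one common neighbour. Together with $k$-regularity and $k\ge 3$, these incidence conditions are precisely the axioms of a projective plane of order $k-1$, whose incidence graph is $G$. Finally, for such $G$ one has $N=J-I$, so $M=(k-1)I+J$ has least eigenvalue $k-1$; hence $R(G)=\sqrt{k-1}$, completing the proof.
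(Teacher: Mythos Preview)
Your proof is correct and follows essentially the same route as the paper: both pass from $R(G)>\sqrt{k-2}$ to the statement that $N=BB^\top-kI$ has least eigenvalue greater than $-2$, use the $2\times 2$ principal submatrices (equivalently, positive definiteness of $N+2I$ with diagonal entries $2$) to force $N$ to be a $(0,1)$-matrix, and then invoke the classification of connected regular graphs with least eigenvalue greater than $-2$ to conclude that $H$ is complete. The only cosmetic difference is that the paper cites this classification directly (Corollary~2.3.22 of \cite{CRS}), whereas you sketch its derivation from the Cameron--Goethals--Seidel--Shult root-system description; your treatment of the exceptional graphs is slightly informal but harmless, since for $k\ge 3$ the degree $d=k(k-1)\ge 6$ already exceeds what any exceptional graph on at most $8$ vertices with $\lambda_{\min}>-2$ can attain as a regular graph.
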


\begin{proof}
Let $|V(G)|=2n$. The adjacency matrix of $G$ can be written as
$$A(G)=\displaystyle{\begin{bmatrix} 0 & B\\ B^T & 0 \end{bmatrix}},$$
where $B$ is a square matrix of order $n$. The matrix $E=BB^T-kI$ is a symmetric matrix of order $n$. Assuming that $R(G)>\sqrt{k-2}$, every eigenvalue $\lambda$ of $G$ satisfies $\lambda^2 > k-2$ and hence all eigenvalues of $E$ are greater than $-2$. Hence, $E+2I$ is a positive definite matrix. All diagonal entries of this matrix are equal to $2$. Positive definiteness in turn implies that all off diagonal entries are $0$ or $1$.
It follows that $E$ is the adjacency matrix of a graph $H$ with the least eigenvalue greater than $-2$.
We see that $H$ is regular since $E\,\mathbf{j}=(BB^T-kI)\,\mathbf{j}=(k^2-k)\,\mathbf{j}$.
The connectedness of $G$ also yields that $H$ is connected. By Corollary 2.3.22 of \cite{CRS},
a connected regular graph with least eigenvalue
greater than $-2$ is either a complete graph or an odd cycle. If $H$ is an odd cycle,
then it is $2$-regular and so from $k^2-k=2$, we have $k=2$, a contradiction. Hence $H$ is a
complete graph. It is easy to see that this implies that $G$ is the incidence graph of a projective plane of order $k-1$.
\end{proof}

For the next theorem, we need the following result \cite[Theorem 2.3.20]{CRS}.

\begin{thm}[\cite{CRS}] \label{thmminge2}
 If $G$ is a connected graph with the least eigenvalue
greater than $-2$, then one of the following holds:
\begin{itemize}
 \setlength{\itemsep}{0pt}
 \setlength{\parskip}{0pt}
 \setlength{\parsep}{0pt}
\item[\rm (i)] $G$ is the line graph of a multigraph $K$, where $K$ is obtained from a tree by adding one edge in parallel to a pendant edge;
\item[\rm (ii)] $G$ is the line graph of a graph $K$, where $K$ is a tree or is obtained from a tree by adding one edge giving a nonbipartite unicyclic graph;
\item[\rm (iii)] $G$ is one of the $573$ exceptional graphs on at most $8$ vertices.
\end{itemize}
\end{thm}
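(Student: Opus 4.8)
The plan is to recast the spectral hypothesis as a geometric representation problem and then invoke the classification of root systems. Since the least eigenvalue of $G$ exceeds $-2$, the matrix $A(G)+2I$ is positive definite; exactly as in the proof of Theorem~\ref{regmed}, its diagonal entries equal $2$ and its off-diagonal entries lie in $\{0,1\}$. Reading $A(G)+2I$ as a Gram matrix, I would attach to each vertex $u$ a vector $x_u$ with $\langle x_u,x_u\rangle=2$ and $\langle x_u,x_w\rangle=1$ when $uw\in E(G)$ and $0$ otherwise. Thus the vertices are represented by vectors of squared length $2$ meeting pairwise at $60^\circ$ or $90^\circ$, and positive definiteness forces these vectors to be linearly independent.

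Next I would pass to the root system they generate. By the theorem of Cameron, Goethals, Seidel and Shult, any set of vectors of norm $\sqrt{2}$ with pairwise inner products in $\{-1,0,1\}$ spans a root system whose irreducible components are of type $A_n$, $D_n$, or $E_8$ (with $E_6,E_7\subseteq E_8$). Connectedness of $G$ corresponds to the representation living in a single irreducible component, so the argument splits according to the type of that component.

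For the classical types I would use the standard dictionary between $D_n$ representations (and $A_n\subseteq D_n$) and generalized line graphs: a connected graph representable in some $D_n$ is a generalized line graph $L(K;a_1,\dots,a_t)$. Here the strict inequality does the decisive work. Computing the least eigenvalue of a line graph $L(K)$ shows that $-2$ is attained unless $K$ is acyclic or has a unique, necessarily odd, cycle, while the "petals'' of a genuine generalized line graph already force an eigenvalue equal to $-2$; tracking the single admissible exception pins down the multigraph of item~(i) and the tree or nonbipartite unicyclic graphs of item~(ii).

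Finally, the exceptional type $E_8$ yields item~(iii) together with the bound on the number of vertices. Because the representing vectors are linearly independent and lie in the $8$-dimensional space spanned by $E_8$, the graph has at most $8$ vertices, so only finitely many candidates remain. The last step, enumerating the connected graphs on at most $8$ vertices that embed in $E_8$ with least eigenvalue strictly above $-2$ and discarding those already accounted for as generalized line graphs, is the main obstacle: it is a delicate finite case analysis (carried out by computer in the literature) that produces the exact count of $573$ graphs.
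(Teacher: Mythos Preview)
The paper does not prove this theorem at all: it is quoted verbatim as Theorem~2.3.20 of \cite{CRS} and used as a black box in the proof of Theorem~\ref{deltaplusmed}. So there is no ``paper's own proof'' to compare your sketch against.

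That said, what you outline is precisely the standard argument behind the result in \cite{CRS}: interpret $A(G)+2I$ as a Gram matrix, invoke Cameron--Goethals--Seidel--Shult to embed the representing vectors in a root system of type $A_n$, $D_n$, or $E_8$, read off the generalized line graph structure in the classical cases, and then refine using the strict inequality $\lambda_{\min}>-2$ to force cases (i)--(ii) and the finite $E_8$ enumeration for (iii). One small quibble: the off-diagonal entries of $A(G)+2I$ lie in $\{0,1\}$ simply because $A(G)$ is an adjacency matrix, not by the positive-definiteness argument of Theorem~\ref{regmed}; that argument was needed there because the matrix in question was $BB^T-kI$, whose off-diagonal entries are a priori arbitrary nonnegative integers. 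Your sketch is otherwise sound as an outline, with the honest acknowledgement that the exact count $573$ requires the computer-assisted enumeration from the literature.
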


We can now prove an analogue to Theorem \ref{regmed} for non-regular graphs.

\begin{thm}\label{deltaplusmed}
Let $G$ be a connected bipartite nonregular graph with maximum degree $\Delta\geqslant3$. Then $R(G)\leqslant\sqrt{\Delta-2}$.
\end{thm}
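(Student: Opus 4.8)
The plan is to follow the strategy of Theorem \ref{regmed}, but to replace the ad hoc identification of $BB^T-kI$ as an adjacency matrix by the full structural classification in Theorem \ref{thmminge2}. First I would record a reduction that makes the median eigenvalue accessible. Since $\Delta\ge3$ we have $\sqrt{\Delta-2}\ge1>0$, so assuming $R(G)>\sqrt{\Delta-2}$ forces $R(G)>0$. Writing $A(G)=\begin{bmatrix} 0 & B\\ B^T & 0\end{bmatrix}$ with $B$ the biadjacency matrix, the eigenvalues of $A(G)$ are the numbers $\pm\sigma_i$ together with some zeros, where the $\sigma_i$ are the singular values of $B$. A median eigenvalue is forced to vanish when $|V(G)|$ is odd, and can be positive only if the two colour classes have equal size $n$ and $B$ is square and nonsingular; in that case $R(G)=\sqrt{\mu_{\min}}$, where $\mu_{\min}$ is the smallest eigenvalue of $M:=BB^T$.

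Next, $R(G)>\sqrt{\Delta-2}$ gives $\mu_{\min}>\Delta-2$, so that $N:=M-(\Delta-2)I$ is positive definite. The entries of $M$ are combinatorial: $M_{uu}=\deg(u)$, and for $u\neq w$ the entry $M_{uw}$ counts the common neighbours of $u$ and $w$. Positive definiteness of $N$ then reads off exactly the structure I need. Every diagonal entry $N_{uu}=\deg(u)-\Delta+2$ must be positive, so every vertex of the first colour class has degree $\Delta-1$ or $\Delta$; the same holds for the second class by applying the argument to $B^TB$. Inspecting $2\times2$ principal submatrices of $N$ shows that every common-neighbour count is $0$ or $1$ (hence $G$ has no $4$-cycle), and moreover that two vertices of degree $\Delta-1$ on the same side have no common neighbour. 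Consequently $A_H:=M-\mathrm{diag}(\deg u)$ is the adjacency matrix of a graph $H$ on the first colour class, and a Rayleigh estimate $z^{T}A_Hz\ge\lambda_{\min}(M)-\Delta>-2$ shows that $H$ has least eigenvalue greater than $-2$. Connectedness of $G$ makes $H$ connected.

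I would then apply Theorem \ref{thmminge2} to $H$. Its edges carry a canonical clique partition: for each vertex $x$ of the second class its neighbourhood $Q_x$ is a clique of size $\deg(x)\ge\Delta-1$, these cliques are pairwise edge-disjoint (a shared edge would produce a $4$-cycle), and each $u$ lies in exactly $\deg(u)\ge\Delta-1$ of them. Matching this partition against the line-graph description of $H$ supplied by the theorem, one writes $H=L(K)$ and argues that the cliques $Q_x$ must be the stars of $K$; then $G$ is recovered as an incidence structure over $K$, and the two-valued degree constraint on $G$ translates into a regularity statement for $K$, hence for $G$, contradicting that $G$ is nonregular. The exceptional graphs of Theorem \ref{thmminge2}(iii) bound $H$, and therefore $G$, to a finite list that can be examined directly.

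The hard part will be this last step: converting the spectral classification of $H$ into rigid combinatorial data and proving that the degree pattern $\{\Delta-1,\Delta\}$, together with the edge-disjoint clique partition and the absence of $4$-cycles, cannot occur unless $G$ is regular. The main lever is that no clique $Q_x$ can contain two vertices of degree $\Delta-1$, since such a pair would share $x$ as a common neighbour, which is forbidden; thus the degree-$(\Delta-1)$ vertices are forced to be spread thinly across the cliques. Turning this local scarcity into a global contradiction, and disposing of the finitely many exceptional graphs, is where the real work of the proof lies.
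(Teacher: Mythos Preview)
Your setup through the definition of $H$ and the verification that $\lambda_{\min}(H)>-2$ is exactly what the paper does. The divergence is in how the classification of $H$ via Theorem~\ref{thmminge2} is exploited, and here your plan is missing the paper's key lemma.

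From positive definiteness of $N$ you extract only that two vertices of degree $\Delta-1$ on the same side cannot share a neighbour. The paper proves the much stronger fact that each colour class contains \emph{at most one} vertex of degree $\Delta-1$: if $v_1,v_2$ were two such vertices, take a shortest path $P$ between them in $H$ with all internal vertices of $G$-degree $\Delta$; the corresponding principal submatrix of $A(H)+D$ is $A(P)+\mathrm{diag}(1,2,\dots,2,1)$, which annihilates the alternating vector $(1,-1,1,\dots)^T$ and hence is not positive definite. This lemma pins down $D=\mathrm{diag}(2,\dots,2,1)$, forces all but at most $d+1$ vertices of $H$ to have degree exactly $d^2+d$ (where $d=\Delta-1$), and makes the line-graph analysis of $H=L(K)$ tractable: one counts leaves of $K$ and is driven to $H=K_{d^2+d+1}$, contradicting nonregularity. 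Two further $3\times3$ and $4\times4$ positive-definiteness checks are also needed to rule out the $573$ exceptional graphs by a size bound $|V(H)|\ge 9$.

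Your alternative via the clique partition $\{Q_x\}$ is a natural idea, but the step ``the cliques $Q_x$ must be the stars of $K$'' is not correct as stated. When $H=L(K)$ with $K$ a tree, each edge of $H$ lies in a unique maximal (star) clique, so each $Q_x$ is contained in some star---but several $Q_x$'s can sit inside the same star, and indeed must, since every vertex $u$ of $H$ lies in $\deg_G(u)\ge\Delta-1$ of the $Q_x$'s while it lies in at most two stars. So the partition $\{Q_x\}$ refines, rather than equals, the Krausz partition, and recovering $G$ from $K$ along these lines does not go through directly. Without the ``at most one low-degree vertex'' lemma the degree spectrum of $H$ is too loose to force the contradiction; adding that lemma is what makes the paper's argument work.
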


\begin{proof}
Let $d=\Delta-1$. Suppose, for a contradiction, that $R(G)>\sqrt{d-1}$. Let $\{U,W\}$ be the bipartition of $V(G)$. Then
$U$ and $W$ have the same size, say $m$, since otherwise $R(G)$ would be zero. We proceed in the same way as in the proof of Theorem \ref{regmed}. The adjacency matrix of $G$ can be written in the form
$$A(G)=\displaystyle{\begin{bmatrix} 0 & B\\ B^T & 0 \end{bmatrix}},$$
where the rows of $B$ are indexed by the elements of $U$ and the columns by $W$.
The matrix $E=BB^T-(d-1)I$ is a symmetric matrix of order $m$. Since $R(G)>\sqrt{d-1}$, we have $\lambda_m(E)>0$. Hence $E$ is a positive definite matrix whose diagonal entries are the integers $\deg_G(u)-(d-1)\leqslant2, u\in U$. Since $E$ is positive definite, these are all equal to $1$ or $2$ and hence the degrees of vertices in $U$ are either $\Delta$ or $\Delta-1$.
Moreover, this in turn implies that all off-diagonal entries of $E$ are either $0$ or $1$. Since the off-diagonal entries in $E$ count the number of walks of length $2$ between vertices in $U$, the last conclusion in particular implies that $G$ has no $4$-cycles.
Let $D$ be the diagonal matrix whose diagonal is the same as the main diagonal of $E$.
Let $H$ be the graph on $U$ with the adjacency matrix $A(H)=E-D$.
Then the least eigenvalue of $H$ is greater than $-2$ and $A(H)+D$ is positive definite. The connectedness of $G$ yields that $H$ is connected.

Suppose that $v_1,v_2\in U$ are distinct vertices of degree $d$ in $G$. Let $P$ be a shortest path in $H$ connecting $v_1$ to $v_2$. The vertices $v_1,v_2$ and the path $P$ can be chosen so that all internal vertices on $P$ are of degree $d+1$ in $G$.
Then $A(P)+{\text{diag}}(1,2,\ldots,2,1)$,
which is a principal submatrix of $A(H)+D$, has eigenvalue $0$ with the eigenvector
$(1,-1,1,-1,\ldots)^T$, a contradiction.
This shows that $U$ contains at most one vertex which has degree $d$ in $G$.
Note that the same argument can be applied to $W$. As $G$ is not regular, we conclude that it has precisely two vertices of degree $d$, one in $U$ and one in $W$. Therefore, we may assume that $D={\mathrm{diag}}(2,\ldots,2,1)$.

Let us now consider degrees of vertices in $H$. Since $G$ has no $4$-cycles, we have for every $u\in U$:
\begin{equation}\label{eq1}
\deg_H(u)=\sum_{uv\in E(G)}(\deg_G(v)-1).
\end{equation}
Since $U$ and $W$ each has precisely one vertex whose degree in $G$ is $d$, (\ref{eq1}) implies the following: If $\deg_G(u)=d$, then $\deg_H(u)\in\{d^2,d^2-1\}$; if $\deg_G(u)=d+1$, then $\deg_H(u)\in\{d^2+d,d^2+d-1\}$ with the smaller value only when $u$ is adjacent to the vertex of degree $d$ in $W$. Thus, $H$ has a unique vertex of degree $d^2$ or $d^2-1$, at most $d$ vertices of degree $d^2+d-1$, and all other vertices are of degree $d^2+d$.

Let $v\in U$ be the vertex with $\deg_G(v)=d$. We claim that
the neighbourhood of $v$ in $H$ is a complete graph. This is
since $$\displaystyle{\begin{bmatrix} 2 & 0 & 1 \\ 0 & 2 & 1\\ 1 & 1 & 1 \end{bmatrix}}$$
is not positive definite and so it cannot be a principal submatrix of $A(H)+D$.
We also claim that $N_H[v_1]\cap N_H[v_2]\subseteq N_H[v]$ for arbitrary distinct vertices $v_1, v_2\in N_H(v)$. If this were not the case, the matrix $$\displaystyle{\begin{bmatrix} 2 & 1 & 1 & 0 \\ 1 & 2 & 1 & 1 \\ 1 & 1 & 2& 1\\
0 & 1 & 1& 1 \end{bmatrix}}$$
which is not positive definite, would be a principal submatrix of $A(H)+D$, a contradiction.
Therefore, $H$ has at least $1+(d^2-1)+(d^2-1)((d^2+d-1)-(d^2-1))=d^3+d^2-d$ or
$1+d^2+d^2((d^2+d-1)-d^2)=d^3+1$ vertices. In any case, we have $|V(H)|\geqslant 9$ and so
 $H$ is the case (i) or (ii) of Theorem \ref{thmminge2}.

Let $H$ be the line graph of a multigraph $K$ as in Theorem \ref{thmminge2},
where $|V(K)|=m$ or $m+1$ and $|E(K)|=m$. Note that $K$ is not a cycle, since then $H$ would be regular in that case.
So, from $\sum_{v\in V(K)}\deg_K(v)=2m$ and $|V(K)|\geqslant m$, we have a vertex $u$ such that $\deg_K(u)=1$. Let $u'$ be the unique neighbour of $u$ in $K$. The degree of the vertex in $H$ corresponding to the edge $uu'$ is at least $d^2-1$. Thus, $\deg_K(u)+\deg_K(u')-2\geqslant d^2-1$ and so $\deg_K(u')\geqslant d^2$.
Let $r$ be the number of vertices of degree one in $K$. The sum of degrees in $K$ is at least $r+d^2+2(|V(K)|-r-1)\leqslant 2m$
which gives $r\geqslant d^2-2\geqslant2$. Since $H$ has only one vertex of degree $d^2-1$ or $d^2$,
we may take $u$ to be a vertex of degree 1 in $K$ such that $\deg_K(u)+\deg_K(u')-2\geqslant d^2+d-1$
and now a similar argument as above gives $r\geqslant d^2+d-2>d+1$. Now again since $H$ has at most $d+1$ vertices of degree $d^2-1$, $d^2$ or $d^2+d-1$,
we may take $u$ such that $\deg_K(u)+\deg_K(u')-2=d^2+d$.
It follows that $H$ has the complete graph of order $d^2+d+1$ as a subgraph
which in turn implies that $H$ is in fact the complete graph of order $d^2+d+1$ since $H$ is connected with maximum degree $d^2+d$. And this contradicts the fact that $H$ is nonregular.
\end{proof}

Theorems \ref{regmed} and \ref{deltaplusmed} can be combined into our main result.

\begin{thm}\label{mainres}
Let $G$ be a connected bipartite graph with maximum degree $\Delta\geqslant3$. Then $R(G)\leqslant\sqrt{\Delta-2}$ unless $G$ is the the incidence graph of a projective plane of order $\Delta-1$, in which case $R(G)=\sqrt{\Delta-1}$.
\end{thm}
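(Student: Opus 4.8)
The plan is to derive the combined statement from the two preceding theorems by a clean dichotomy on regularity. Since $G$ is connected with maximum degree $\Delta$, it is regular precisely when it is $\Delta$-regular; so I would split into the case where $G$ is $\Delta$-regular and the case where $G$ is nonregular, and first check that these two cases exhaust all possibilities.

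In the nonregular case, Theorem \ref{deltaplusmed} applies verbatim with the given $\Delta\geqslant3$ and yields $R(G)\leqslant\sqrt{\Delta-2}$, with no exceptional graph arising. In the $\Delta$-regular case I would invoke Theorem \ref{regmed} with $k=\Delta\geqslant3$: reading its contrapositive, either $R(G)\leqslant\sqrt{\Delta-2}$, or else $R(G)>\sqrt{\Delta-2}$, and then the theorem forces $R(G)=\sqrt{\Delta-1}$ and identifies $G$ as the incidence graph of a projective plane of order $\Delta-1$. Assembling the two cases gives exactly the stated trichotomy: either $R(G)\leqslant\sqrt{\Delta-2}$, or $G$ is such an incidence graph with $R(G)=\sqrt{\Delta-1}$.

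The only point deserving a sanity check — and the place where a careless write-up could go astray — is the consistency of the exceptional case across the two theorems. One must confirm that the incidence graph of a projective plane of order $\Delta-1$ is indeed $\Delta$-regular, so that it can only surface in the regular case governed by Theorem \ref{regmed} and never contradicts the unconditional bound of Theorem \ref{deltaplusmed}. It is also worth recording the short spectral computation confirming $R(G)=\sqrt{\Delta-1}$: writing $q=\Delta-1$ and letting $B$ be the point–line incidence matrix on $q^2+q+1$ points, one has $BB^T=qI+J$, so $BB^T$ has eigenvalue $(q+1)^2$ once and $q$ with multiplicity $q^2+q$; the bipartite spectrum of $G$ is therefore $\pm(q+1)$ together with $\pm\sqrt{q}$, each of multiplicity $q^2+q$, and since the number of vertices $2(q^2+q+1)$ is even the two median eigenvalues are $\pm\sqrt{q}=\pm\sqrt{\Delta-1}$. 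I expect no genuine obstacle in this final theorem, as all the substantive work is already carried by Theorems \ref{regmed} and \ref{deltaplusmed}; the combination amounts to bookkeeping the regular versus nonregular split.
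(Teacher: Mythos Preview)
Your proposal is correct and follows exactly the paper's own approach: the paper presents Theorem~\ref{mainres} without proof, simply noting that Theorems~\ref{regmed} and~\ref{deltaplusmed} can be combined into the main result, which is precisely your regular/nonregular dichotomy. Your added sanity checks (that the incidence graph is $\Delta$-regular and the direct spectral computation giving $R(G)=\sqrt{\Delta-1}$) are correct and make the bookkeeping explicit.
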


There are connected graphs that are not incidence graphs of a projective planes and attain the bound $\sqrt{\Delta-2}$ of Theorem \ref{mainres}. The incidence graph of a biplane ($(v,k,2)$ symmetric design) has degree $k$ and HL-index $\sqrt{k-2}$. Only 17 biplanes are known and the question of existence of infinitely many biplanes is an old open problem in design theory \cite{Hall}. An infinite family of cubic graphs with HL-index equal to 1 is constructed in \cite{GM}.

Finally, we give an upper bound for the HL-index of bipartite graphs in term of the minimum degree.

\begin{thm}\label{deltamed}
Let $G$ be a bipartite graph with minimum degree $\delta$. Then
$R(G)\leqslant \sqrt{\delta}$. Moreover, if $\delta\ge2$ or $\delta=1$ and $G$ has a component
with minimum degree $1$ that is not isomorphic to $K_2$, then $R(G)<\sqrt{\delta}$.
\end{thm}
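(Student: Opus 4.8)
The plan is to read $R(G)$ off the singular values of the biadjacency matrix, turn the bound into a positive-semidefiniteness estimate, and then analyze the equality case structurally. First I would dispose of the unbalanced case: if the colour classes $U,W$ of $G$ have different sizes then $R(G)=0$, as already noted in the proof of Theorem~\ref{deltaplusmed}, and the claimed (even strict) inequality is immediate because $\delta\ge1$ gives $\sqrt{\delta}>0$. So assume $|U|=|W|=m$ and write $A(G)$ in the block form used above, with $B$ now a square $m\times m$ matrix. The eigenvalues of $A(G)$ are then $\pm\sigma$ as $\sigma$ runs over the singular values of $B$, so that $R(G)=\sigma_{\min}(B)$ and $R(G)^{2}=\lambda_{\min}(BB^{T})=\lambda_{\min}(B^{T}B)$.

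Next comes the inequality itself. The matrix $BB^{T}$ is positive semidefinite with diagonal entries $\deg_G(u)$ for $u\in U$, and $B^{T}B$ likewise has diagonal entries $\deg_G(w)$ for $w\in W$. Since the least eigenvalue of a symmetric matrix never exceeds any of its diagonal entries, we get $R(G)^{2}\le\min_{u\in U}\deg_G(u)$ and $R(G)^{2}\le\min_{w\in W}\deg_G(w)$, and hence $R(G)^{2}\le\delta$, i.e. $R(G)\le\sqrt{\delta}$. This already settles $\delta=0$ as well, forcing $R(G)=0$.

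The heart of the argument is the equality case, which I expect to be the main obstacle. Suppose $R(G)=\sqrt{\delta}$ and pick a vertex $v$ with $\deg_G(v)=\delta$; depending on whether $v\in U$ or $v\in W$, the relevant matrix $M\in\{BB^{T},B^{T}B\}$ satisfies $M_{vv}=\delta=\lambda_{\min}(M)$. Equality in the diagonal bound forces $e_v$ to be an eigenvector of $M$ for its least eigenvalue, so the $v$-row of $M$ vanishes off the diagonal. As each such off-diagonal entry counts the common neighbours of $v$ and another vertex of its class, this says that $v$ shares no neighbour with any other vertex of its class, which is equivalent to every neighbour of $v$ having degree $1$. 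The delicate points here are tracking which colour class $v$ lies in and correctly translating the vanishing row into this degree statement.

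Finally I would conclude by cases. If $\delta\ge2$, then a degree-$1$ neighbour of $v$ contradicts the minimum degree being at least $2$. If $\delta=1$, I apply the same reasoning taking $v$ to be a degree-$1$ vertex lying in the hypothesized component $C_0$ of minimum degree $1$ with $C_0\not\cong K_2$: its unique neighbour $w$ then also has degree $1$, so $\{v,w\}$ is an entire connected component isomorphic to $K_2$, contradicting $v\in C_0\not\cong K_2$. In either case the assumption $R(G)=\sqrt{\delta}$ is untenable, so $R(G)<\sqrt{\delta}$. The localization to $C_0$ is legitimate precisely because the argument only used the single diagonal entry of $M$ at $v$.
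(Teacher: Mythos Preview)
Your argument is correct and takes a genuinely different route from the paper's. The paper deletes a vertex $v$ of minimum degree, uses interlacing between $G$ and $H=G-v$, and combines this with the trace identity $\sum\lambda_i^2=2|E|$ to squeeze $\lambda_m^2(G)\le\delta$; strictness comes from $\lambda_1(G)>\lambda_1(H)$ for connected $G$. You instead identify $R(G)^2$ with $\lambda_{\min}(BB^T)$ and bound it by the diagonal entry at a minimum-degree vertex via the Rayleigh quotient. Your approach is arguably cleaner: it avoids the reduction to connected graphs (which in the paper needs a small side argument about how $R$ behaves under disjoint union), and the equality analysis yields the structural statement ``every neighbour of $v$ has degree~$1$'' directly, from which both the $\delta\ge2$ and the $\delta=1$ cases fall out. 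The paper's approach, on the other hand, is closer in spirit to standard interlacing machinery and makes the role of the minimum-degree vertex deletion explicit; it also generalises more readily to settings where one wants to compare $G$ with a subgraph rather than with a Gram matrix.
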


\begin{proof}
We may assume that $G$ is connected and of even order $n=2m$. Let $v$ be a vertex of degree
$\delta$ and $H=G-v$. Since $G$ is connected, $\lambda_1(G)>\lambda_1(H)$. By interlacing,
$\lambda_i(G)\geqslant\lambda_i(H)$ for $1<i\le m$. We also have $\lambda_m(H)=0$ since $H$ is bipartite and has an odd number of vertices.

The sum of the squares of the eigenvalues of $G$ is the trace of $A^2$, which is equal to $2|E(G)|$. By considering only half of the eigenvalues and using the fact that eigenvalues of a bipartite graph are symmetric about zero, we have:
\begin{eqnarray*}
\lambda_m^2(G)&=&|E(G)|-\sum_{i=1}^{m-1} \lambda_i^2(G)\\
&=&\delta+\sum_{i=1}^{m-1} \lambda_i^2(H)-\sum_{i=1}^{m-1} \lambda_i^2(G)\\[2mm]
&\leqslant& \delta.
\end{eqnarray*}
If $m\geqslant2$, then for $i=1$, we have $\lambda_1^2(H)- \lambda_1^2(G)<0$, so
the last inequality is strict. This proves the assertion of the theorem.
\end{proof}

\section{Covering graphs and their eigenvalues}

If $\hat G$ is a covering graph of $G$, then all eigenvalues of $G$ are included in the spectrum of $\hat G$. The essence of this section is to show how to control the newly arising eigenvalues in the covering graph.

We will denote the eigenvalues of a symmetric $n\times n$ matrix $M$ by
$\lambda_1(M)\geqslant\cdots\geqslant \lambda_n(M)$. Also,
if $\mathbf{x}$ is an eigenvector of $M$, then we denote the corresponding eigenvalue by $\lambda_\mathbf{x}(M)$.
For a positive integer $t$, let $I_t$ and $\mathbf{0}_{t}$ denote the $t\times t$ identity matrix and the $t\times t$ all-zero matrix, respectively.
A \emph{permutation matrix} $C$ of \emph{size} $t$ and \emph{order} $m$ is a $t\times t$
$(0,1)$-matrix that has exactly one entry $1$ in each row and each column and $m$ is the smallest positive integer such that $C^m=I_t$.

Let us replace each edge of a multigraph $G$ by two oppositely oriented directed edges joining the same pair of vertices and let $\overrightarrow{E}(G)$ denote the resulting set of
directed edges. We denote by $(e,u,v)\in \overrightarrow{E}(G)$ the
directed edge in $\overrightarrow{E}(G)$ corresponding to an edge $e=uv$ that is oriented from $u$ to $v$. Let $S_t$ denote the symmetric group of all permutations of size $t$. We shall consider a representation of $S_t$ as the set of all permutation matrices of size $t$.
A function $\phi: \overrightarrow{E}(G)\rightarrow S_t$ is a \emph{permutation voltage assignment} for $G$ if $\phi(e,u,v)=\phi(e,v,u)^{-1}$ for every $e\in E(G)$.
A \emph{$t$-lift} of $G$ \emph{associated to} $\phi$ and denoted by $G(\phi)$, is a multigraph with the adjacency matrix obtained from the adjacency matrix of $G$ by
replacing any $(u,v)$-entry of $A(G)$ by the $t\times t$ matrix $\sum_{(e,u,v)\in \overrightarrow{E}(G)} \phi(e,u,v)$.
Note that if $G$ is bipartite, then so is $G(\phi)$.
We say that $G(\phi)$ is an \emph{Abelian lift} if all matrices in the image of $\phi$ commute
with each other.

Bilu and Linial \cite{BL} found an expression for the spectrum of $2$-lifts. They proved that the spectrum of a 2-lift $G(\phi)$ consists of the spectrum of $G$ together with the spectrum of the matrix $A^-$ which is obtained from the adjacency matrix of $G$ by replacing each $(u,v)$-entry by $-1$ whenever the voltage $\phi(e,u,v)$ is not the identity.
Note that $2$-lifts are always Abelian since the permutation matrices of size 2 commute with each other.
Below we extend the result of \cite{BL} to arbitrary Abelian $t$-lifts. Since permutation matrices are diagonalizable and any commuting family of diagonalizable $t\times t$ matrices has a common basis of eigenvectors, we observe that any commuting set of
permutation matrices of the same size has a common basis of eigenvectors.

In the proofs we will use the following result, see \cite[Theorem 1]{KOV}.

\begin{thm}[\cite{KOV}]
\label{kov}
Let $t$ and $n$ be positive integers and for $i, j\in\{1, \ldots, n\}$, let $B_{ij}$ be $t\times t$ matrices over a commutative ring that commute pairwise. Then
$$\displaystyle{\det\begin{bmatrix} B_{11} & \cdots & B_{1n}\\ \vdots & \ddots & \vdots \\ B_{n1} & \cdots & B_{nn} \end{bmatrix}=\det\biggl(\,\sum_{\sigma\in S_n}\mathrm{sign}(\sigma)B_{1\sigma(1)}\cdots B_{n\sigma(n)}\biggr)},$$
where $S_n$ is the set of all permutations of\/ $\{1, \ldots, n\}$.
\end{thm}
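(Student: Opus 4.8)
The plan is to prove the identity first over an algebraically closed field and then transfer it to an arbitrary commutative ring by a polynomial-identity argument. For the field case I would exploit the single most useful consequence of pairwise commutativity: over an algebraically closed field a commuting family of matrices is simultaneously triangularizable. So I would choose an invertible $P$ with $T_{ij}:=P^{-1}B_{ij}P$ upper triangular for all $i,j$ simultaneously. Conjugating the big $tn\times tn$ block matrix by $\operatorname{diag}(P,\dots,P)$ leaves its determinant unchanged and replaces each block $B_{ij}$ by $T_{ij}$.

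Next I would record the diagonal entries $\mu_{ij}^{(k)}:=(T_{ij})_{kk}$ for $k=1,\dots,t$. Reordering the $tn$ coordinates so that the within-block index $k$ is the primary sort key and the block index $i$ is the secondary key turns the conjugated matrix into a block upper-triangular matrix whose $t$ diagonal blocks are the $n\times n$ scalar matrices $M^{(k)}=(\mu_{ij}^{(k)})_{i,j}$: every entry strictly below this block-diagonal corresponds to a position $((i,k),(j,l))$ with $k>l$, hence vanishes precisely because each $T_{ij}$ is upper triangular. Consequently the left-hand determinant factors as $\prod_{k=1}^{t}\det M^{(k)}$. For the right-hand side I would use that conjugation commutes with the formal determinant, so $P^{-1}\bigl(\sum_\sigma \operatorname{sign}(\sigma)B_{1\sigma(1)}\cdots B_{n\sigma(n)}\bigr)P=\sum_\sigma \operatorname{sign}(\sigma)T_{1\sigma(1)}\cdots T_{n\sigma(n)}$, which is again upper triangular; its $k$-th diagonal entry equals $\sum_\sigma \operatorname{sign}(\sigma)\prod_i\mu_{i\sigma(i)}^{(k)}=\det M^{(k)}$, because the diagonal of a product of upper-triangular matrices is the product of the diagonals. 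Taking the determinant of this triangular matrix therefore also yields $\prod_{k=1}^{t}\det M^{(k)}$, matching the left-hand side and settling the algebraically closed field case. I note that this already covers the situation needed for the $t$-lifts of this paper, where the blocks are sums of commuting permutation matrices and are hence simultaneously diagonalizable over $\mathbb{C}$.

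The remaining, and to my mind the only delicate, step is the passage to an arbitrary commutative ring. Both sides are polynomials with integer coefficients in the entries of the $B_{ij}$, and the hypothesis is the system of polynomial equations $B_{ij}B_{pq}=B_{pq}B_{ij}$. I would form the universal commuting tuple over $\mathbb{Z}$ by taking all the entries as indeterminates modulo the ideal $I$ generated by these commutator relations; every commuting tuple over every commutative ring is a specialization of it, so it suffices to prove the identity for the universal tuple, i.e.\ to show that the difference of the two sides lies in $I$. The field case already shows that this difference vanishes at every $\mathbb{C}$-point of the commuting variety, and here there is no density gap, since over $\mathbb{C}$ \emph{every} commuting tuple is triangularizable, not merely a dense subset of them. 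The main obstacle is to upgrade this pointwise vanishing to actual membership in $I$ over $\mathbb{Z}$: one must control the radical (via the Nullstellensatz over $\mathbb{C}$) and rule out denominators and integral torsion. If that ideal-theoretic bookkeeping became unwieldy, the route I would fall back on is a direct inductive proof over commutative rings, carrying out block Gaussian elimination in the first block-column and expanding via the adjugate, with commutativity of the blocks being exactly what makes the elimination legitimate.
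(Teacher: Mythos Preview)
The paper does not give its own proof of this theorem; it is quoted from \cite{KOV} and used as a black box in the proof of Theorem~\ref{eigenlift}. So there is no in-paper argument to compare against, only the original source.

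On the substance of your proposal: your algebraically closed field argument via simultaneous triangularization is clean and correct, and as you rightly observe it already suffices for the only use the paper makes of the result (the blocks there are sums of commuting permutation matrices over $\mathbb{C}$). The passage to an arbitrary commutative ring, however, is not completed. Vanishing at every $\mathbb{C}$-point of the commuting variety only places the difference in the radical $\sqrt{I\otimes\mathbb{C}}$, and whether the ideal of commutator relations for an $n^2$-tuple of $t\times t$ matrices is radical is a genuinely hard question (it is open in general, known only for small parameters), so the Nullstellensatz route does not close by itself; the integrality and torsion issues you flag are additional obstacles on top of that. Your fallback---induction on $n$ via block elimination in the first block-column, using the adjugate and the commutativity of the blocks to make the cofactor expansion legitimate, and clearing the invertibility hypothesis on $B_{11}$ by adjoining an indeterminate---is exactly the argument in \cite{KOV} and is the right way to finish over a general commutative ring.
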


\begin{thm}\label{eigenlift}
Let $G$ be a multigraph and $\phi$ be a permutation voltage assignment for an Abelian $t$-lift $G(\phi)$ of $G$. Let $\mathcal{B}$ be a common basis of eigenvectors of the permutation matrices in the image of\/ $\phi$.
For every $\mathbf{x}\in \mathcal{B}$, let $A_{\mathbf{x}}$ be the matrix obtained from the adjacency matrix of $G$ by replacing any $(u,v)$-entry of $A(G)$ by\/ $\sum_{(e,u,v)\in\overrightarrow{E}(G)} \lambda_\mathbf{x}(\phi(e,u,v))$.
Then the spectrum of $G(\phi)$ is the multiset union of the spectra of the matrices $A_{\mathbf{x}}$ $(\mathbf{x}\in \mathcal{B})$.
\end{thm}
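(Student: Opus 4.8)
The plan is to regard the adjacency matrix of the lift $G(\phi)$ as an $n\times n$ block matrix $M$, where $n=|V(G)|$, whose $(u,v)$ block is the $t\times t$ matrix $M_{uv}=\sum_{(e,u,v)\in\overrightarrow E(G)}\phi(e,u,v)$. The Abelian hypothesis is exactly what makes this tractable: each block is a sum of the commuting permutation matrices in the image of $\phi$, so all the blocks $M_{uv}$ commute pairwise and share the common eigenbasis $\mathcal B$. The single computation underlying everything is that for $\mathbf x\in\mathcal B$ the block $M_{uv}$ acts on $\mathbf x$ by the scalar $\lambda_\mathbf x(M_{uv})=\sum_{(e,u,v)}\lambda_\mathbf x(\phi(e,u,v))=(A_\mathbf x)_{uv}$; that is, $A_\mathbf x$ records precisely the common eigenvalues of the blocks along $\mathbf x$.

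First I would establish the eigenvalue-lifting step. Given $\mathbf x\in\mathcal B$ and an eigenvector $\mathbf v=(v_1,\dots,v_n)^T\in\mathbb C^n$ of $A_\mathbf x$ with $A_\mathbf x\mathbf v=\mu\mathbf v$, form the vector $\mathbf w=\mathbf v\otimes\mathbf x\in\mathbb C^{nt}$ whose $u$-th block equals $v_u\mathbf x$. A direct block computation gives that the $u$-th block of $M\mathbf w$ is $\sum_v M_{uv}(v_v\mathbf x)=\bigl(\sum_v (A_\mathbf x)_{uv}v_v\bigr)\mathbf x=(A_\mathbf x\mathbf v)_u\,\mathbf x=\mu v_u\mathbf x$, so $M\mathbf w=\mu\mathbf w$. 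Thus every eigenvalue of every $A_\mathbf x$ is an eigenvalue of $M$.

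The cleanest way to obtain both inclusions at once, with correct multiplicities, is to phrase this as an invariant-subspace decomposition rather than to chase individual eigenvectors. Writing $\mathbb C^{nt}=\mathbb C^n\otimes\mathbb C^t$ and using that $\mathcal B$ is a basis of $\mathbb C^t$, we get $\mathbb C^{nt}=\bigoplus_{\mathbf x\in\mathcal B}\bigl(\mathbb C^n\otimes\mathbb C\mathbf x\bigr)$. The block computation above shows $M(\mathbf v\otimes\mathbf x)=(A_\mathbf x\mathbf v)\otimes\mathbf x$, so each summand is $M$-invariant and $M$ restricted to it is represented by $A_\mathbf x$. Consequently the characteristic polynomial factors as $\det(zI_{nt}-M)=\prod_{\mathbf x\in\mathcal B}\det(zI_n-A_\mathbf x)$, which is exactly the asserted statement that the spectrum of $G(\phi)$ is the multiset union of the spectra of the $A_\mathbf x$. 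If one prefers to match the tools already assembled, the same factorization drops out of Theorem \ref{kov} applied to the commuting blocks $z\delta_{uv}I_t-M_{uv}$: the resulting $t\times t$ matrix is simultaneously diagonalized by $\mathcal B$, and its eigenvalue along $\mathbf x$ is $\sum_{\sigma}\mathrm{sign}(\sigma)\prod_u\bigl(z\delta_{u\sigma(u)}-(A_\mathbf x)_{u\sigma(u)}\bigr)=\det(zI_n-A_\mathbf x)$.

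The points needing care are bookkeeping ones rather than a single hard obstacle. I would confirm at the outset that the Abelian hypothesis really forces all blocks to commute and that a common eigenbasis $\mathcal B$ of size $t$ exists over $\mathbb C$ (already noted in the text). I would also observe that, since $\phi(e,v,u)=\phi(e,u,v)^{-1}$ and permutation matrices have eigenvalues on the unit circle, $\lambda_\mathbf x(\phi(e,v,u))=\overline{\lambda_\mathbf x(\phi(e,u,v))}$, so each $A_\mathbf x$ is Hermitian; this is reassuring in that the $A_\mathbf x$ have real spectra as they must, though the multiset-union conclusion itself follows purely from the characteristic-polynomial factorization and requires no diagonalizability of the individual $A_\mathbf x$. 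The only genuine subtlety is the multiplicity accounting: the dimensions of the $t$ invariant summands sum to $nt=\dim\mathbb C^{nt}$ precisely because $\mathcal B$ is a basis, so no eigenvalue is over- or under-counted.
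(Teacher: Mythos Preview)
Your proof is correct. Your primary route---the tensor decomposition $\mathbb C^{nt}=\bigoplus_{\mathbf x\in\mathcal B}(\mathbb C^n\otimes\mathbb C\mathbf x)$ together with the block computation $M(\mathbf v\otimes\mathbf x)=(A_\mathbf x\mathbf v)\otimes\mathbf x$, showing that each summand is $M$-invariant with restriction $A_\mathbf x$---differs from the paper's, which instead applies Theorem~\ref{kov} on determinants of commuting-block matrices to obtain $\det(\lambda I-A(G(\phi)))=\det\bigl(\sum_{\sigma}\mathrm{sign}(\sigma)B_{1\sigma(1)}\cdots B_{n\sigma(n)}\bigr)$ and only then diagonalizes that $t\times t$ matrix via $\mathcal B$. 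Your argument is more elementary and self-contained (no external determinant identity needed) and has the bonus of producing explicit eigenvectors of the lift; the paper's route buys a one-line factorization once Theorem~\ref{kov} is available. You do sketch the paper's approach at the end as an alternative, so the two proofs ultimately converge on the same characteristic-polynomial factorization.
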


\begin{proof}
The adjacency matrix of a $t$-lift can be written in the block form, with the blocks being indexed by $V(G)$, where the $(u,v)$-block $D_{uv}$ is equal to the permutation matrix $\phi(e,u,v)$ if $u,v$ are joined by a single edge $e$, or to $\sum_{(e,u,v)\in\overrightarrow{E}(G)} \phi(e,u,v)$ if there are multiple edges, or $\mathbf{0}_{t}$ if $u,v$ are not adjacent in $G$. Thus, assuming $V(G)=\{1,\ldots,n\}$, we can write
$$\lambda I-A(G(\phi))=\displaystyle{\begin{bmatrix} B_{11} & \cdots & B_{1n}\\ \vdots & \ddots & \vdots \\ B_{n1} & \cdots & B_{nn} \end{bmatrix}},$$
where the diagonal blocks are $\lambda I_t$, while the off-diagonal blocks are $B_{uv}=-D_{uv}$. All block matrices $B_{uv}$ commute with each other and all their products and sums also commute and have $\mathcal{B}$ as a common basis of eigenvectors.
By Theorem \ref{kov}, we have
\begin{align*}
\det(\lambda I-A(G(\phi)))&=\det\biggl(\,\sum_{\sigma\in S_n}\mathrm{sign}(\sigma)B_{1\sigma(1)}\cdots B_{n\sigma(n)}\biggr)\\
&=\prod_{\mathbf{x}\in \mathcal{B}}\lambda_\mathbf{x}\biggl(\,\sum_{\sigma\in S_n}\mathrm{sign}(\sigma)B_{1\sigma(1)}\cdots B_{n\sigma(n)}\biggr)\\
&=\prod_{\mathbf{x}\in \mathcal{B}}\biggl(\,\sum_{\sigma\in S_n}\mathrm{sign}(\sigma)\lambda_\mathbf{x}(B_{1\sigma(1)})\cdots \lambda_\mathbf{x}(B_{n\sigma(n)})\biggr)\\[2mm]
&=\prod_{\mathbf{x}\in \mathcal{B}} \det(\lambda I-A_\mathbf{x}).
\end{align*}
This equality gives the conclusion of the theorem.
\end{proof}

Let us keep the notation of Theorem \ref{eigenlift} and its proof. There are two things to be observed. Since every permutation matrix $D_{uv}$ satisfies $D_{uv}^{-1}=D_{uv}^T$, we have that the matrices $A_\mathbf{x}$ ($\mathbf{x}\in \mathcal{B}$) are Hermitian. The $(u,v)$-entry of $A_\mathbf{x}$ is the eigenvalue of $D_{uv}$ corresponding to the eigenvector $\mathbf{x}$. Thus the characteristic polynomial $\varphi(A_\mathbf{x},\lambda) = \det(\lambda I-A_\mathbf{x})$ is a polynomial in $\lambda$, whose coefficients are polynomials in the $\mathbf{x}$-eigenvalues of the permutation matrices in the image of $\phi$.

\section{Median eigenvalues of covering graphs}

In this section we present an approach through graph covering to construct infinite families of graphs with large HL-index. The main tool is Theorem \ref{eigenlift}, which will be invoked in a very special situation.

Let us select a set $F\subseteq \overrightarrow{E}(G)$ of oriented edges such that whenever $(e,u,v)\in F$, the opposite edge $(e,v,u)$ is not in $F$. For every positive integer $t$, let $C_t$ be a cyclic permutation of size and order $t$. Now, let us consider an infinite family of Abelian lifts $\phi_1,\phi_2,\phi_3,\dots$ such that $\phi_t$ is an Abelian $t$-lift over the graph $G$, whose voltages are given by the following rule:
\begin{equation}
\label{eq:define phit}
   \phi_t(e,u,v) = \left\{
                     \begin{array}{ll}
                       C_t, & (e,u,v)\in F; \\
                       C_t^{-1}, & (e,v,u)\in F; \\
                       I_t, & \hbox{otherwise.}
                     \end{array}
                   \right.
\end{equation}
In this way, we obtain an infinite family of graphs $G(\phi_t)$.
By Theorem \ref{eigenlift}, we can express the characteristic polynomial of $A(G(\phi_t))$ as a
product of the characteristic polynomials of matrices $A_{\mathbf{x}}$. For each $\mathbf{x}\in \mathcal{B}$, the characteristic polynomial of $A_{\mathbf{x}}$ depends only on $\lambda$ and on the eigenvalue $\alpha=\lambda_\mathbf{x}(C_t)$ and on
$\lambda_\mathbf{x}(C_t^{-1}) = \alpha^{-1} = \overline{\alpha}$. The dependence on $\alpha$ can be expressed in terms of the real parameter $\nu=\alpha+\overline{\alpha}$. For cyclic permutations, every such $\nu$ is an eigenvalue of the $t$-cycle, which is of the form $\nu=2\cos(2\pi j/t)$ for some $j\in\{0,1,\dots,t-1\}$.
Thus, there is a polynomial $\FP(\lambda,\nu)$ such that
\begin{equation}
\label{eq:polynomial}
\det(\lambda I-A(G(\phi_t)) = \prod_{0\leqslant j < t} \FP(\lambda,2\cos(2\pi j/t)).
\end{equation}
Note that $\FP(\lambda,\nu)$ is independent of $t$ and only depends on the underlying graph $G$ and the values for $\nu$ lie in the interval $[-2,2]$ for every $t$. All eigenvalues of $G(\phi_t)$ correspond to the zero-set of the polynomial $\FP(\lambda,\nu)$ with $\nu\in [-2,2]$. When $t$ gets large, the appropriate values of $\nu$ become dense in the interval $[-2,2]$.
This shows that if $G$ is bipartite, then
$R(G(\phi_t))$ converges to some value when $t$ goes to infinity. This
is better seen in a special case which is given in the following theorem.

\begin{thm}\label{liftoneedge}
Let $G$ be a bipartite graph and let $E_0$ be a set of edges all incident with some fixed vertex $v_0$. Let $F\subset \overrightarrow{E}(G)$ be the set of directed edges $\{(e,u,v_0)\mid uv_0\in E_0\}$.
For each positive integer $t$, fix a cyclic permutation matrix $C_t$ of size and order $t$ and define a permutation voltage assignment $\phi_t$ by {\rm (\ref{eq:define phit})}. Then 
$$R(G(\phi_{2t}))=R(G(\phi_2))$$
for every $t\ge1$, whereas the values $R(G(\phi_{2t+1}))$ are non-increasing as a function of\/ $t$ and $$\lim_{t\to\infty}R(G(\phi_{2t+1}))=R(G(\phi_2)).$$
\end{thm}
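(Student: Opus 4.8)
The plan is to exploit the very special structure of the matrices $A_{\mathbf{x}}$ arising here. Since every edge carrying a nontrivial voltage is incident with the single vertex $v_0$, each $A_{\mathbf{x}}$ differs from $A(G)$ only in the row and column indexed by $v_0$: writing $\alpha=\lambda_{\mathbf{x}}(C_t)$ and $\nu=\alpha+\overline\alpha$, and separating out $v_0$ with $M=A(G-v_0)$, a Schur complement computation on $\det(\lambda I-A_{\mathbf{x}})$ shows that the only $\nu$-dependence enters linearly (the $|\alpha|^2$ contribution is constant, while the two cross terms combine, using symmetry of $(\lambda I-M)^{-1}$, into $\nu$ times a single quantity). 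Hence the polynomial $\FP(\lambda,\nu)$ of \eqref{eq:polynomial} has degree at most $1$ in $\nu$, say $\FP(\lambda,\nu)=p(\lambda)-\nu\,q(\lambda)$, with $p,q$ depending only on $G$, $v_0$ and $E_0$. This affineness in $\nu$ is the structural fact that drives the whole argument.

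Next I would translate the HL-index into a sampling problem for a single function. We may assume the two colour classes of $G$ have equal size, as otherwise $R(G(\phi_t))=0$ for all $t$ and the statement is trivial; then each $A_{\mathbf{x}}(\nu)$ is bipartite-symmetric of even order $n$, with spectrum symmetric about $0$. Let $f(\nu)$ be the smallest nonnegative eigenvalue of $A_{\mathbf{x}}(\nu)$, i.e.\ the smallest positive root of $\FP(\cdot,\nu)$. By Theorem \ref{eigenlift} and \eqref{eq:polynomial}, the spectrum of $G(\phi_t)$ is the union of the spectra of the $A_{\mathbf{x}}(\nu_j)$ with $\nu_j=2\cos(2\pi j/t)$, so that
$$R(G(\phi_t))=\min_{0\leqslant j<t} f\bigl(2\cos(2\pi j/t)\bigr).$$
Everything now reduces to understanding the single function $f$ on $[-2,2]$.

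The main step is the monotonicity lemma: $f$ is continuous on $\mathbb{R}$ and is either constant or strictly monotone on $[-2,2]$. Continuity is the standard continuous dependence of eigenvalues on matrix entries. For injectivity (when $f$ is nonconstant) I would argue by contradiction: if $f(\nu_1)=f(\nu_2)=\lambda_0$ for $\nu_1\neq\nu_2$, then $\FP(\lambda_0,\nu_1)=\FP(\lambda_0,\nu_2)=0$, and since $\FP(\lambda_0,\cdot)$ is affine in $\nu$ it vanishes identically, so $\lambda_0$ is an eigenvalue of $A_{\mathbf{x}}(\nu)$ for \emph{every} $\nu$. Cauchy interlacing between $A_{\mathbf{x}}(\nu)$ and its principal submatrix $M$ (whose spectrum does not depend on $\nu$) confines the smallest positive eigenvalue to the fixed interval $[0,\mu]$, where $\mu$ is the smallest positive eigenvalue of $M$, and shows it is the \emph{unique} eigenvalue of $A_{\mathbf{x}}(\nu)$ in $(0,\mu)$; hence $\lambda_0$ must be $f(\nu)$ for all $\nu$, forcing $f$ constant, a contradiction. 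A continuous injection on an interval is strictly monotone, proving the lemma; in particular $f$ attains its minimum over $[-2,2]$ at an endpoint, so $\min\{f(2),f(-2)\}=R(G(\phi_2))$ is the global minimum of $f$ on $[-2,2]$. I expect this lemma to be the main obstacle, above all making the interlacing argument airtight at the degenerate boundaries (when $f(\nu)\in\{0,\mu\}$, or when $A_{\mathbf{x}}(\nu)$ has repeated or vanishing eigenvalues); the affineness of $\FP$ in $\nu$ is exactly what rescues these cases, since it forces any coincidence $f(\nu_1)=f(\nu_2)$ to propagate to all $\nu$ and hence to make the branch constant.

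Finally I would read off the two assertions from the sampling formula. For even index $2t$, the values $j=0$ and $j=t$ give $\nu=2$ and $\nu=-2$, so both endpoints are always sampled; by the lemma the minimum over all samples is $\min\{f(2),f(-2)\}=R(G(\phi_2))$, independent of $t$. For odd index $2t+1$, the value $\nu=2$ is sampled ($j=0$) but $\nu=-2$ is not, and the extremal sampled value on the other side is $-2\cos\!\bigl(\pi/(2t+1)\bigr)$, which lies in $(-2,2)$ and decreases monotonically to $-2$ as $t\to\infty$. By monotonicity of $f$, $R(G(\phi_{2t+1}))$ equals the value of $f$ at whichever extremal sample it is, so it is non-increasing in $t$ and, by continuity of $f$, converges to $\min\{f(2),f(-2)\}=R(G(\phi_2))$, as claimed.
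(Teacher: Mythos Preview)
Your proof is correct and takes a genuinely different route to the key step than the paper does. Both arguments begin identically: the special placement of $F$ at a single vertex makes $\FP(\lambda,\nu)=p(\lambda)-\nu\,q(\lambda)$ affine in $\nu$ (your Schur complement derivation is exactly right), and both reduce to showing that the median eigenvalue, viewed as a function of the sampled $\nu$-value, is extremised at the endpoints of $[-2,2]$.

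Where the arguments diverge is in justifying that extremisation. The paper never proves monotonicity of your function $f$; instead it shows that the curve $\FP(\lambda,\nu)=0$ cannot meet $\lambda=0$ for $\nu\in[-2,2]$. Concretely, it computes the unique $\nu_0$ with $\FP(0,\nu_0)=0$ as $\nu_0=2(\FP(0,-2)+\FP(0,2))/(\FP(0,-2)-\FP(0,2))$, observes that $\FP(0,2)=\det(-A(G))$ and $\FP(0,-2)=\det(-A(G(\phi_2)))/\det(-A(G))$ have the same sign because both graphs are bipartite with nonzero HL-index, and concludes $|\nu_0|>2$. Once no eigenvalue branch crosses zero on $[-2,2]$, linearity in $\nu$ forces the smallest positive branch to be extremised at the boundary, and the sampling conclusion follows (the paper is somewhat terse here). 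Your approach replaces this determinant computation with Cauchy interlacing against $M=A(G-v_0)$: since $\mu_m(M)=0$ and $\lambda_{m-1}\geqslant\mu_{m-1}\geqslant\lambda_m\geqslant0$, the open interval $(0,\mu_{m-1})$ can contain only $\lambda_m$, and the affineness then propagates any coincidence $f(\nu_1)=f(\nu_2)$ to constancy.

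Your concern about the boundaries is well placed. The case $\lambda_0=0$ is harmless (a zero eigenvalue in a bipartite spectrum of even order forces $\lambda_m=0$, so $f\equiv0$), but the case $\lambda_0=\mu_{m-1}$ genuinely does not yield a contradiction from interlacing alone: $\mu_{m-1}$ could sit as $\lambda_{m-1}(\nu)$ rather than $\lambda_m(\nu)$. The clean fix is to argue for monotonicity rather than injectivity directly: if $f$ were non-monotone, the intermediate value theorem would produce coincidences $f(\nu_1')=f(\nu_3')=\lambda^*$ for an \emph{interval} of values $\lambda^*$, and one simply chooses $\lambda^*\ne\mu_{m-1}$ in that interval to land strictly inside $(0,\mu_{m-1})$ and obtain the contradiction. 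With that adjustment your argument is airtight and arguably more transparent in the final ``sampling'' step than the paper's, which leaves the passage from $|\nu_0|>2$ to the conclusion somewhat implicit.
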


\begin{proof}
By Theorem \ref{eigenlift}, there is a polynomial $\FP(\lambda,\nu)$ such that (\ref{eq:polynomial}) holds and every eigenvalue of any $G(\phi_t)$ lies among the values $\lambda$ for which there is a $\nu\in[-2,2]$ such that $\FP(\lambda,\nu)=0$.
It is easy to see that our choice of $F$ implies that $\FP(\lambda,\nu)$ is linear in $\nu$, so it can be expressed in the form
$$
   \FP(\lambda,\nu)=p(\lambda)-\nu q(\lambda).
$$
If $R(G(\phi_2))$ is zero, then $R(G(\phi_{2t}))=0$ for every $t\ge1$, since by Theorem \ref{eigenlift}, the spectrum of $G(\phi_2)$ is contained in the spectrum of $G(\phi_{2t})$. Hence, we may assume that $R(G(\phi_2))\neq 0$. Note that $R(G)\ge R(G(\phi_2))$, so we also have $R(G)\ne0$.

Let us first assume that $q(0)\neq 0$. Let $\FP(0,\nu_0)=0$. Then $\nu_0=p(0)/q(0)$. We have
$\FP(0,2)=p(0)-2q(0)$ and $\FP(0,-2)=p(0)+2q(0)$ which results in
$$
   \nu_0 = p(0)/q(0) = \frac{2(\FP(0,-2)+\FP(0,2))}{\FP(0,-2)-\FP(0,2)}.
$$
On the other hand, Eq.(\ref{eq:polynomial}) gives that $\FP(0,2)=\det(-A(G))$ and $\FP(0,-2)=\det(-A(G(\phi_2)))/\det(-A(G))$. Since the eigenvalues of bipartite graphs $G$ and $G(\phi_2)$ are symmetric about zero, this implies that the above determinants, and thus also $\FP(0,2)$ and $\FP(0,-2)$, have the same sign. It follows that $|\nu_0|>2$. 
Since $\FP(\lambda,\nu)$ is linear in $\nu$, for each $\lambda$ there exists at most one value $\nu$  such that $\FP(\lambda,\nu)=0$ (and there is exactly one if $q(\lambda)\ne0$).
Therefore, the continuity of $\FP(\lambda,\nu)$ and its linearity in $\nu$ show that the eigenvalue $R(G(\phi_t))$ is either a root of $\FP(\lambda,2)$ or a root of
$\FP(\lambda,-2)$ (if $t$ is even) or a root of $\FP(\lambda,2\cos(\pi (t-1)/t))$ (if $t$ is odd).
This is independent of $t$ when $t$ is even and is already among the eigenvalues of $G(\phi_2)$. For odd values of $t$, this shows the behavior as claimed in the theorem.

Suppose next that $q(0)=0.$ Then $p(0)\neq 0$, since otherwise we have $\FP(0,2)=0$ and so $R(G)=0$, a contradiction. This shows that if $\FP(\lambda_0,\nu_0)=0$ and $\lambda_0$ goes to zero, then $\nu_0$ goes to infinity.
Again the continuity of $\FP(\lambda,\nu)$ and its linearity in $\nu$ show that $R(G(\phi_t))$ is either a root of
$\FP(\lambda,2)$ or a root of $\FP(\lambda,-2)$ (if $t$ is even) or a root of $\FP(\lambda,2\cos(\pi (t-1)/t))$ (if $t$ is odd).
We now complete the proof in the same was as above.
\end{proof}

\begin{thm}\label{infinit}
For any integer $k$ for which $k-1$ is a prime power, there exist infinitely many connected bipartite $k$-regular graphs $G$ with $\sqrt{k-2}-1<R(G)<\sqrt{k-1}-1$.
\end{thm}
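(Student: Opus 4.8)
The plan is to combine the structural bounds from Section 2 with the covering construction of Theorem \ref{liftoneedge}. The starting point is a graph with HL-index exactly on or near the threshold: the incidence graph of a projective plane of order $k-1$, which by Theorem \ref{mainres} is connected bipartite $k$-regular with $R(G)=\sqrt{k-1}$. Such a plane exists precisely when $k-1$ is a prime power, which is exactly the hypothesis. I would take this incidence graph $G$ as the base graph, select a vertex $v_0$ and the set $E_0$ of all $k$ edges incident with $v_0$, and form the family of lifts $G(\phi_t)$ as in Theorem \ref{liftoneedge}. Because $G$ is $k$-regular and the lifts $G(\phi_t)$ are again $k$-regular (each $(u,v)$-block of the lifted adjacency matrix is a single permutation matrix, contributing degree $k$ in every fibre), and because they inherit bipartiteness and can be arranged to be connected, the family $\{G(\phi_t)\}$ gives infinitely many connected bipartite $k$-regular graphs.

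The heart of the argument is to pin down the limiting value of the HL-index. By Theorem \ref{liftoneedge}, the sequence $R(G(\phi_{2t+1}))$ is non-increasing and converges to $R(G(\phi_2))$, and $R(G(\phi_{2t}))=R(G(\phi_2))$ for all $t$. So everything reduces to computing $R(G(\phi_2))$, the HL-index of a single $2$-lift, and showing it lies strictly between $\sqrt{k-2}-1$ and $\sqrt{k-1}-1$. I would compute this via the Bilu--Linial description specialized through our polynomial $\FP(\lambda,\nu)=p(\lambda)-\nu q(\lambda)$: the spectrum of $G(\phi_2)$ is the union of the spectrum of $G$ (the $\nu=2$ slice) and the spectrum of the signed matrix $A^-$ (the $\nu=-2$ slice). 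The key spectral input is that for the incidence graph of a projective plane, $B B^T = (k-1)I + J$, so the nontrivial singular values of $B$ correspond to eigenvalues $\pm\sqrt{k-1}$ of $G$ with known multiplicities, while the signed perturbation coming from flipping the $k$ edges at $v_0$ shifts the relevant median eigenvalue.

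The main obstacle I expect is the explicit eigenvalue computation for $A^-$ (equivalently, the $\nu=-2$ slice), since this is where the bound $\sqrt{k-1}-1$ enters. The natural approach is to observe that replacing the $k$ voltages at $v_0$ by $-1$ is a rank-controlled modification of $B$: it negates a single row (or equivalently flips the signs along the star at $v_0$), so $A^-$ differs from $A(G)$ in a structured, low-rank way relative to the combinatorial design structure. I would analyze $B^- (B^-)^T$, which differs from $BB^T=(k-1)I+J$ by a correction supported on the neighbourhood of $v_0$, and use eigenvalue interlacing together with the rigidity of the projective-plane spectrum to show the median eigenvalue of $G(\phi_2)$ satisfies $\sqrt{k-2}-1 < R(G(\phi_2)) < \sqrt{k-1}-1$. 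The strict inequalities should follow because the perturbation is nontrivial (giving the strict upper bound, since we move strictly off the projective-plane value $\sqrt{k-1}$) yet small relative to the gap to $\sqrt{k-2}$ (giving the strict lower bound); making the quantitative estimate $\sqrt{k-1}-1$ precise, rather than merely showing convergence, is the delicate part and is what ties the construction to the additive $-1$ shift appearing in the claimed interval.
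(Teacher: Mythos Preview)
Your outline has the right overall architecture --- start from the projective-plane incidence graph, build lifts via Theorem~\ref{liftoneedge}, and reduce everything to computing $R(G(\phi_2))$ --- but your choice of $E_0$ is fatal. If $E_0$ is the full star at $v_0$, then the signed matrix $A^-$ is obtained from $A(G)$ by negating the entire row and column indexed by $v_0$; in other words $A^- = D A(G) D$ where $D$ is diagonal with a $-1$ at $v_0$ and $+1$ elsewhere. Thus $A^-$ is similar to $A(G)$ and has exactly the same spectrum. Equivalently, the $2$-lift $G(\phi_2)$ is combinatorially just two disjoint copies of $G$ (swap the two fibres over $v_0$ to see the isomorphism), so it is not connected and $R(G(\phi_2))=\sqrt{k-1}$, not strictly less. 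No amount of interlacing against the rank-one perturbation of $BB^T$ will produce a strict upper bound, because there is no perturbation at the spectral level.

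The paper instead takes $E_0=\{e_0\}$ to be a \emph{single} edge. With that choice the signed matrix is not switching-equivalent to $A(G)$, and the girth-$6$ structure of the incidence graph allows an explicit eigenvector ansatz: writing $e_0=v_0v_1$, one partitions $V(G)$ into $\{v_0,v_1\}$, their neighbours $W$, and the rest $W'$, and looks for an eigenvector of $G(\phi_2)$ of the form $(\mathbf{x},-\mathbf{x})$ with $\mathbf{x}$ constant on each class. This yields a cubic $\lambda^3+(1-q)\lambda^2-3q\lambda+q^2-q=0$ (where $q=k-1$), which one then evaluates at $\lambda=\sqrt{q-1}-1$ and $\lambda=\sqrt{q}-1$ and applies the intermediate value theorem. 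That is how the additive $-1$ in the bounds actually appears; your proposal gestures at this (``the delicate part'') but does not supply a mechanism, and the mechanism you do sketch (interlacing for a low-rank modification of $BB^T$) would at best give bounds of the form $\sqrt{k-1-c}$ rather than $\sqrt{k-1}-1$.
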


\begin{proof}
Let $G$ be the incidence graph of a projective plane of order
$q=k-1$. Note that $G$ is bipartite and $k$-regular. It is well-known (see, e.g., \cite{GR}) that $G$ has eigenvalues $\pm k$ and $\pm\sqrt{q}$. Thus, $R(G)=\sqrt{q}$. Let $e_0$ be any edge of $G$ and $E_0=\{e_0\}$.
For each positive integer $t$, define the permutation assignment $\phi_t$ as in Theorem \ref{liftoneedge}.

The adjacency matrix of $G(\phi_2)$ can be written as $A(G(\phi_2))=A(G)\otimes I_2+B$, where $B$ is a matrix with only $\pm 2$ as nonzero eigenvalues.
Let $r$ be the number of vertices of $G(\phi_2)$. By the Courant-Weyl inequalities
$\lambda_{i+j-r}(A+B)\geqslant \lambda_i(A)+\lambda_j(B)$, we have $\lambda_{r/2-1}(A(G(\phi_2)))\geqslant \lambda_{r/2}(A(G)\otimes I_2)+\lambda_{r-1}(B)$ which gives $\lambda_{r/2-1}(G(\phi_2))\geqslant \sqrt{q}$.
In fact, since $G(\phi_2)$ has $\sqrt{q}$ as an eigenvalue with big multiplicity, one observes that
$\lambda_{r/2-1}(G(\phi_2))= \sqrt{q}$ and so $R(G)=\lambda_{r/2}(G(\phi_2))\leqslant \sqrt{q}$.

Let $e_0=\{v_0,v_1\}$. Let us consider the partition $\{v_0,v_1\}\cup W \cup W'$ of $V(G)$, where $W$ is the set of all vertices adjacent
to $v_0$ or $v_1$ and $W'$ is the set of vertices nonadjacent to $v_0,v_1$.
Define the vector $\mathbf{x}$ on $V(G)$ as
$$\mathbf{x}(w)=\left\{
\begin{array}{ll}
1 & w=v_0\ {\text or }\ w=v_1,\\
a & w\in W,\\
b & w\in W'.
\end{array}\right.$$
We now consider the vector $\mathbf{y}=(\mathbf{x},-\mathbf{x})$ on $V(G(\phi_2))$. Since the girth of $G$ is six, it is easy to see that $\mathbf{y}$ is an eigenvector of $G(\phi_2)$ with
the corresponding eigenvalue $\lambda$ if and only if $\lambda=qa-1,$  $a\lambda=qb+1$, and
$b\lambda=qb+a.$
%the corresponding eigenvalue $\lambda$ if and only if the following equalities hold:
%\begin{align*}
%\lambda&=qa-1,\\
%a\lambda&=qb+1,\\
%b\lambda&=qb+a.
%\end{align*}
Solving these equations in term of $\lambda$ gives $\lambda^3+(1-q)\lambda^2-3q\lambda+q^2-q=0$.
The value of the expression on the left side of this equation is 
$2$ and $\sqrt{q}-q$ for $\lambda=\sqrt{q-1}-1$ and $\lambda=\sqrt{q}-1$, respectively.
Therefore, there is a root between $\sqrt{q-1}-1$ and $\sqrt{q}-1$. This implies that $\sqrt{q-1}-1<R(G(\phi_2))<\sqrt{q}-1$. Finally, since $R(G)=\sqrt{q}$,
Theorem \ref{liftoneedge} implies that $\sqrt{q-1}-1<R(G(\phi_{2t}))=R(G(\phi_2))<\sqrt{q}-1$ for all $t$.
\end{proof}

The proof of Theorem \ref{infinit} can be used to obtain a slightly better bound on $R(G)$. 
Let $t=\sqrt{k-1}$. 
The value of $\lambda^3+(1-q)\lambda^2-3q\lambda+q^2-q$ is 
\begin{equation}\label{hteq}
(ht)^{-3}((2h^2-h^3)t^5+(h^3-2h^2)t^4+(4h^2-h)t^3+(3h-h^2)t^2-2ht-1)
\end{equation} for $\lambda=t-1-(ht)^{-1}$.  Note  that  (\ref{hteq})  is positive  for
$h=2$ and any $t$, whereas  it is negative for any $h>2$ if $t$ is large enough.  Therefore  we  find that 
$\sqrt{k-1}-1-\frac{1}{2\sqrt{k-1}}< R(G)<\sqrt{k-1}-1-\frac{1}{(2+\epsilon)\sqrt{k-1}}$ for every $\epsilon>0$ and any large $k$.

\end{document}